\documentclass{article}
\usepackage{maa-monthly}


\theoremstyle{theorem}
\newtheorem{theorem}{Theorem}

\theoremstyle{definition}

\newcommand\bigpar[1]{\bigl(#1\bigr)}
\newcommand\Bigpar[1]{\Bigl(#1\Bigr)}

\begin{document}

\title{Lagrange Inversion Formula by Induction}
\markright{Lagrange Inversion by Induction}
\author{Erlang Surya and Lutz Warnke}

\maketitle

\begin{abstract}
We present a simple inductive proof of the Lagrange Inversion Formula. 
\end{abstract}

\section{Introduction.}\label{sec:LIF}
The Lagrange inversion formula is a fundamental result in combinatorics. 
In its most basic form (see Theorem~\ref{thm:LIF:N} with ${H(z)=z}$ and~${H'(z)=1}$), it solves the functional equation~${A(x)=x\Phi(A(x))}$ for~${A(x)}$, by expressing the coefficients of the formal power series~$A(x)$ in terms of the coefficients of the formal power series~$\Phi(z)$. 
Functional equations of this form frequently arise in enumerative combinatorics, 
and in many applications the Lagrange inversion formula thus yields explicit counting formulas 
(e.g., for trees, permutations, 
and planar~maps); see~\cite{RandomTrees,Analytic,AofA,StanleyBook2,WilfBook}. 

As is so often the case for fundamental results, there are many different proofs of the \mbox{Lagrange} inversion formula, 
including ones based on Cauchy’s coefficient formula for holomorphic functions, residues of formal Laurent Series, and 
tree-counting arguments, just to name a few (see~\cite[Section~5.1]{WilfBook}, \cite[Section~4]{GesselSurvey}, \cite[Section~5.4]{StanleyBook2} and~\cite{BLL,Comtet,Egorychev,Henrici,Hofbauer,Krattenthaler} for more details and additional proofs). 
Furthermore, as one might expect, 
there are many different generalizations of the \mbox{Lagrange} inversion formula, including multivariate forms 
(see~\cite{Gessel1987,GesselSurvey,GouldenJackson,merlini2006,Sokal} and the references~therein).

In this expository note we present a simple and elementary 
\mbox{`just-do-it'} inductive proof of the \mbox{Lagrange} inversion formula 
(where all proof-steps emerge naturally).  
%
\begin{theorem}[Lagrange~Inversion~Formula]\label{thm:LIF:N}
Assume that~${A(x)=\sum_{n \ge 0}a_n x^n}$ and~${\Phi(z)=\sum_{r \ge 0}c_r z^r}$ are formal power series satisfying
\begin{equation}\label{as:LIF}
A(x)=x \Phi\bigpar{A(x)} .
\end{equation}
Then, for any integer~$n \ge 0$ and any formal power series~${H(z)=\sum_{r \ge 0} h_r z^r}$, 
\begin{equation}\label{eq:LIF}
n[x^n]H\bigpar{A(x)} = [z^{n-1}]H'(z)\Phi^n(z). 
\end{equation}
\end{theorem}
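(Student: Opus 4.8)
The plan is to reduce to monomials $H(z)=z^k$ and then induct on $n$. Since both sides of~(\ref{eq:LIF}) are linear in $H$ --- and since $A(x)=x\Phi(A(x))$ forces $[x^0]A(x)=0$, so that $A(x)^k$ has order at least $k$ and only finitely many powers contribute to any fixed coefficient --- it suffices to prove~(\ref{eq:LIF}) for each $H(z)=z^k$ with $k\ge 0$. Writing $b_{k,m}:=[z^m]\Phi(z)^k$, the target becomes the identity
\[
 n\,[x^n]A(x)^k \;=\; k\,[z^{n-k}]\Phi(z)^n \;=\; k\,b_{n,\,n-k}\qquad\text{for all integers } n\ge 0,\ k\ge 0 .
\]

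Next I would dispose of the boundary cases, which will also cover the base of the induction. If $n=0$, both sides vanish; if $k>n$, both sides vanish as well (the left because $A(x)^k$ has order $\ge k>n$, the right because $n-k<0$); if $k=0$ and $n\ge 1$, both sides are $0$; and if $k=n\ge 1$, then using that $[x^0]A(x)=0$ gives $[x^1]A(x)=[x^0]\Phi(A(x))=\Phi(0)=c_0$, one computes $n\,[x^n]A(x)^n=nc_0^n=n\,[z^0]\Phi(z)^n$. This leaves the range $1\le k\le n-1$, which I would handle by strong induction on $n\ge 1$ --- the case $n=1$ being entirely covered by the boundary cases just listed.

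For the inductive step, fix $n\ge 2$ and $1\le k\le n-1$, raise $A(x)=x\Phi(A(x))$ to the $k$-th power to get $A(x)^k=x^k\Phi(A(x))^k$, and expand $\Phi(A(x))^k=\sum_{m\ge 0}b_{k,m}A(x)^m$ (legitimate since $A(x)$ has no constant term). Extracting $[x^n]$ and applying the induction hypothesis at level $n-k\ge 1$ to each term $[x^{n-k}]A(x)^m$ gives
\[
 n\,[x^n]A(x)^k \;=\; n\sum_{m\ge 0}b_{k,m}\,[x^{n-k}]A(x)^m \;=\; \frac{n}{n-k}\sum_{m\ge 0}m\,b_{k,m}\,[z^{n-k-m}]\Phi(z)^{n-k} .
\]
The last sum equals $[z^{n-k}]\bigpar{\sum_{m\ge 0}m\,b_{k,m}z^m}\Phi(z)^{n-k}$, and since $\sum_{m\ge 0}m\,b_{k,m}z^m=z\frac{d}{dz}\Phi(z)^k=kz\,\Phi(z)^{k-1}\Phi'(z)$, it further equals $k\,[z^{n-k-1}]\Phi(z)^{n-1}\Phi'(z)$. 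Finally $\Phi(z)^{n-1}\Phi'(z)=\frac{1}{n}\frac{d}{dz}\Phi(z)^n$, whose coefficient of $z^{n-k-1}$ is $\frac{n-k}{n}b_{n,\,n-k}$; collecting the constants yields $n\,[x^n]A(x)^k=k\,b_{n,\,n-k}$, as required.

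I expect the main obstacle to be the bookkeeping in the inductive step: spotting that the weighted sum $\sum_{m}m\,b_{k,m}[z^{n-k-m}]\Phi(z)^{n-k}$ collapses after two applications of ``differentiate, then read off a single coefficient'', via the identity $\Phi^{k-1}\Phi'\cdot\Phi^{n-k}=\Phi^{n-1}\Phi'$, and then verifying that the stray factors $\frac{n}{n-k}$ and $\frac{n-k}{n}$ cancel. One must also take care to peel off the cases $k\ge n$ before dividing by $n-k$, so that the induction genuinely descends --- the power $n-k$ is strictly below $n$ precisely because $k\ge 1$.
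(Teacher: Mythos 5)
Your proposal is correct and follows essentially the same route as the paper: reduce to monomials $H(z)=z^k$, dispose of the boundary cases $k=0$, $k=n$, $k>n$ (and $n=0$), and for $1\le k<n$ insert the functional equation~\eqref{as:LIF} to reach $[x^{n-k}]\Phi^k(A(x))$, apply the induction hypothesis at level $n-k$, and finish with the derivative identity $(\Phi^n(z))'=n\Phi'(z)\Phi^{n-1}(z)$. The only organizational difference is that the paper carries an arbitrary $H$ through the induction hypothesis and invokes it directly with $H=\Phi^k$, whereas you keep a monomial-only hypothesis and recover the same step by expanding $\Phi^k(A(x))=\sum_{m}b_{k,m}A(x)^m$ and applying the hypothesis term by term --- an equivalent bookkeeping choice.
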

To fully understand and appreciate the statement and conclusion of Theorem~\ref{thm:LIF:N}, 
it might be useful to study the frequently asked questions discussed in the next section.

\section{Frequently asked questions.}\label{sec:faq}%
\subsection{What does the notation~$[x^n]F(x)$ mean?}
This is a widely-used~\cite{Bona,RandomTrees,Analytic,AofA,StanleyBook1,WilfBook} 
short-hand notation for the coefficient of~$x^n$ in the formal power series~$F(x)$,~i.e., 
\begin{align}\label{def:coeff}
[x^n]F(x) := f_n \quad \text{ when } \quad F(x)=\sum_{r \ge 0} f_r x^r.
\end{align}

\subsection{What are formal power series?}
In brief: given a commutative coefficient ring~$R$, the ring~$R[[x]]$ of formal power series 
is the set of all `formal sums' of the form~$\sum_{r \ge 0} c_r x^r$ with~$c_r \in R$, 
where addition, multiplication and differentiation are defined naturally:
\begin{align}
\label{def:add}
\sum_{r \ge 0}a_r x^r +\sum_{r \ge 0}b_r x^r & := \sum_{r \ge 0}(a_r+b_r) x^r ,\\
\label{def:mult}
\sum_{r \ge 0}a_r x^r \cdot \sum_{r \ge 0}b_r x^r & := \sum_{r \ge 0} \bigpar{\sum_{0 \le s \le r} a_{s} b_{r-s}} x^r,\\
\label{def:der}
\bigpar{\sum_{r \ge 0}a_r x^r}' & := \sum_{r \ge 1} (r a_r) x^{r-1} = \sum_{r \ge 0} a_r (x^{r})'.
\end{align}
The ring~$R[[x]]$ of formal power series satisfies (more or less) all the properties one would expect, including the following well-known derivative formula:
\begin{equation}\label{eq:der:identity}
    (\Phi^m(z))' = m \Phi'(z) \Phi^{m-1}(z) \qquad \text{for any integer~$m \ge 1$,}
\end{equation}
see~Section~\ref{sec:apx} for a short proof. 
For more details about formal power series we refer to the textbooks~\cite{GouldenJackson,StanleyBook1} 
or the \textit{American Mathematical Monthly} expository paper~\cite{Niven} 
(which won the Lester R.~Ford Award for expository~excellence); see also~\cite{merlini2007}.

\subsection{Why is the conclusion~\eqref{eq:LIF} useful in enumerative combinatorics?} 
In many enumeration problems, the formal power series~$A(x)$ is used as follows~\mbox{\cite{Bona,RandomTrees,Analytic,AofA,StanleyBook1,WilfBook}}: 
the coefficients~$a_n$ encode 
the number of objects of size~$n$, such as the number of certain \mbox{$n$-vertex} trees. 
Exploiting combinatorial properties of the objects of interest, one then infers a functional equation for~$A(x)$: 
for example, given an integer~$t \ge 1$, when~$a_n$ denotes the number of unlabelled rooted plane \mbox{$t$-ary} trees with~$n$ vertices (both external and internal) as in~\cite[Example~I.14 (p.~68)]{Analytic}, then one can obtain that
\begin{equation}\label{eq:example:0}
A(x)=x\bigpar{1+A^t(x)}.
\end{equation}
The Lagrange inversion formula shines when these functional equations cannot be explicitly solved for~$A(x)$, 
which in example~\eqref{eq:example:0} is the case when~${t \ge 5}$. 
In these situations the crux is that~\eqref{eq:LIF} still allows us to determine the coefficients of~$A(x)$: 
setting~$\Phi(z):=1+z^t$ and~$H(z):=z$ (so that~$H'(z)=1$), 
for~$n \ge 1$ it~directly~gives
\begin{equation}\label{eq:example:1}
a_n = [x^n]A(x) \stackrel{\eqref{eq:LIF}}{=}  \frac{1}{n}[z^{n-1}]\hspace{-0.6em}\underbrace{(1+z^t)^{n}}_{=\sum_{r=0}^{n}\tbinom{n}{r}z^{tr}} 
\hspace{-0.3em} = \frac{1}{n}\binom{n}{\tfrac{n-1}{t}} \quad \text{ provided $t \; | \; n-1$}.
\end{equation}
This illustrates the conceptual upshot of~\eqref{eq:LIF} in applications: 
 it can specify the coefficients of an unknown formal power series~$A(x)$ that is defined by the functional equation~\eqref{as:LIF} 
in terms of the known formal power series~$\Phi(z)$; see~\cite{RandomTrees,Analytic,AofA,StanleyBook2,WilfBook}.

\subsection{What is the point of arbitrary~$H(z)$ in~\eqref{eq:LIF}: is~$H(z)=z$ not enough?}
Building upon the previous question, in many enumeration problems the following idea is used: when a formal power series counts certain objects, then suitable functions of it count other objects of interest (see~\mbox{\cite[Chapter~3]{Bona}}, \mbox{\cite[Sections~I.2 and~II.2]{Analytic}}, or \mbox{\cite[Sections~5.2 and~5.3]{AofA}}). 
For example, if~$A(x)$ counts \mbox{$t$-ary} trees as in~\eqref{eq:example:0},~then
\begin{equation}\label{eq:example:2}
B(x):=A^k(x)
\end{equation}
counts so-called ordered \mbox{$k$-forests} of \mbox{$t$-ary} trees, 
which are simply \mbox{$k$-sequences} of \mbox{$t$-ary} trees; see~\cite[below~(69) on~p.~66]{Analytic}.
Having~$H(z)$ in~\eqref{eq:LIF} crucially allows us to directly determine the coefficients of~$B(x)$:
using~$H(z):=z^k$ and~$\Phi(z):=1+z^t$~gives
\[
b_n = [x^n]B(x) \stackrel{\eqref{eq:LIF}}{=} \frac{1}{n}[z^{n-1}]kz^{k-1} \Phi^n(z) = \frac{k}{n}[z^{n-k}](1+z^t)^{n} 
\]
for~$n \ge 1$, which can then be computed analogously to~\eqref{eq:example:1}. 
This illustrates why for applications it is useful to allow for arbitrary formal power series~$H(z)$ in~\eqref{eq:LIF}. 

\subsection{Doesn't the Lagrange~Inversion~Formula require~$a_0=0$ and~$c_0 \neq 0$?}
The two assumptions~$a_0=0$ and~$c_0 \neq 0$ arise naturally in applications of the Lagrange~Inversion~Formula. Indeed, it is well-known (and not difficult to check) that the assumed functional equation~\eqref{as:LIF} requires~${a_0=0}$, and, furthermore, that the special case~${c_0=0}$ corresponds to the degenerate power series~${A(x)=0}$. 
Nevertheless, it turns out that 
Theorem~\ref{thm:LIF:N} is true without these two assumptions, i.e., they are formally~redundant. 

\subsection{Isn't the Lagrange Inversion Formula about the compositional inverse?}
There are indeed 
formulations of the Lagrange Inversion Formula that concern the compositional inverse~$F^{\langle -1\rangle}(x)$ of a given formal power series~$F(x)=\sum_{r \ge 1}f_r x^r$, 
which satisfies~$F^{\langle -1\rangle}(F(x))=x=F(F^{\langle -1\rangle}(x))$. 
To this end we need to assume the existence of~$F^{\langle -1\rangle}(x)$, which turns out to be equivalent to~$f_1$ being invertible in the coefficient ring~$R$ (in which case $F^{\langle -1\rangle}(x)=\sum_{r \ge 1} g_r x^r$ with~$g_1=1/f_1$; see~\cite[Section~5.4]{StanleyBook2} and~\cite[Section~1.1]{GesselSurvey}).
%
%
To relate this setup to 
Theorem~\ref{thm:LIF:N}, we set~$\phi(x):=x/F(x)$ and observe that $x\phi(F^{\langle -1\rangle}(x))=F^{\langle -1\rangle}(x)$, 
so by invoking~\eqref{eq:LIF} with~$A(x):=F^{\langle -1\rangle}(x)$ it follows that, 
for any integer~$n \ge 0$ and any formal power series~${H(z)=\sum_{r \ge 0} h_r z^r}$, 
\begin{equation}\label{eq:LIF:compinverse}
n[x^n]H\bigl(F^{\langle -1\rangle}(x)\bigr)=[x^{n-1}] H'(x)\left( \frac{x}{F(x)} \right)^n . 
\end{equation}
The conceptual crux is that~\eqref{eq:LIF:compinverse} 
relates the coefficients of the formal power series~$F(x)$ and its compositional inverse~$F^{\langle -1\rangle}(x)$; 
for more details we refer  
to~\mbox{\cite[Section~3.8]{Comtet}}, \cite[Section~6.12]{AofA}, \cite[Section~5.4]{StanleyBook2} and the references~therein.

\section{Proof by Induction.}\label{sec:proof}
\begin{proof}[Proof of Theorem~\ref{thm:LIF:N}]
Using induction on~$n \ge 0$, for each~$n$ we shall prove that~\eqref{eq:LIF} holds for any formal power series~$H(z)$.  
The base case~$n=0$ is trivial, since both sides of~\eqref{eq:LIF} are zero (for the right-hand side the crux is that the power~$n-1$ of~$z$~is~negative).

We now turn to the induction step~$n \ge 1$, where we first exploit that the derivative is a linear operator: 
indeed, for the induction step it suffices to establish~that 
\begin{equation}\label{eq:LIF:goal:0}
n[x^n] A^k(x) = [z^{n-1}] (z^k)' \Phi^n(z)
\end{equation}
for all integers~$k \ge 0$, as the desired identity~\eqref{eq:LIF} then follows for any formal power series~$H(z)=\sum_{k \ge 0}h_k z^k$ using linearity of the~$[x^n]$ operator and~\eqref{def:der}:
\begin{equation*}
\begin{split}
n[x^n]H(A(x)) & =\sum_{k \ge 0}h_k n[x^n]A^k(x) \stackrel{\eqref{eq:LIF:goal:0}}{=} \sum_{k \ge 0}h_k [z^{n-1}](z^k)'\Phi^n(z) \\
& =  [z^{n-1}]\Bigpar{\sum_{k \ge 0}h_k(z^k)'}\Phi^n(z) \stackrel{\eqref{def:der}}{=} [z^{n-1}]H'(z)\Phi^n(z).
\end{split}
\end{equation*}
To complete the induction step, in view of~${(z^k)'=kz^{k-1}}$ it thus suffices to prove that, for all integers~$k \ge 0$,  
\begin{equation}\label{eq:LIF:goal}
n[x^n] A^k(x) = k[z^{n-k}] \Phi^n(z).
\end{equation}
It will be convenient (and instructive) to first verify~\eqref{eq:LIF:goal} in a few degenerate cases:  
\begin{itemize}
\item \emph{Case~$k=0$:} here both sides of~\eqref{eq:LIF:goal} are zero: 
for the left-hand side the crux is that~$A^k(x)=A^0(x)$ contains no powers of~$x$ of form~$x^n$ with~$n \ge 1$. 
\item \emph{Case~$k > n$:} here both sides of~\eqref{eq:LIF:goal} are again zero: 
for the right-hand side the crux is that the power~$n-k$ of~$z$ is negative, and for the left-hand side the crux is that the assumption~\eqref{as:LIF} implies that in~$A(x)^k$ all occurring powers of~$x$ are higher~than~$n$.
\item \emph{Case~$k = n$:} here~\eqref{eq:LIF:goal} is true since, using the assumption~\eqref{as:LIF} and~$n=k$, it readily follows that 
${n[x^n] A^k(x)} = {n[x^{0}]\Phi^n(A(x))} = {n (c_0)^n} = {k [z^{n-k}] \Phi^n(z)}$. 
\end{itemize}
It thus remains to verify~\eqref{eq:LIF:goal} in the case~$1 \le k < n$. 
Assumption~\eqref{as:LIF} implies~that
\begin{equation}\label{eq:LIFB:step1}
n[x^n] A^k(x) \stackrel{\eqref{as:LIF}}{=} n[x^n] x^k \Phi^k\bigpar{A(x)} = n [x^{n-k}] \Phi^k\bigpar{A(x)}. 
\end{equation}
By the induction hypothesis we may apply~\eqref{eq:LIF} with~${H(z)=\Phi^k(z)}$ and~$n$ replaced by~${n-k}$, 
and so by the derivative identity~\eqref{eq:der:identity} with~$m=k$ and~$m=n$ 
it follows~that 
\begin{equation}\label{eq:LIFB:step2}
\begin{split}
n [x^{n-k}] \Phi^k\bigpar{A(x)} & \stackrel{\eqref{eq:LIF}}{=} \frac{n}{n-k}[z^{n-k-1}] (\Phi^k(z))' \Phi^{n-k}(z)\\
& \stackrel{\eqref{eq:der:identity}}{=} \frac{k}{n-k} [z^{n-k-1}] n\Phi'(z)\Phi^{n-1}(z) \\
& \stackrel{\eqref{eq:der:identity}}{=} \frac{k}{n-k} [z^{n-k-1}] (\Phi^{n}(z))'.
\end{split}
\end{equation}
Finally, observe that by definition 
(see~\eqref{def:coeff} and~\eqref{def:der} above) 
we have 
\begin{equation}\label{eq:LIFB:step3}
[z^{n-k-1}] (\Phi^{n}(z))' \stackrel{\eqref{def:der}}{=}  (n-k) [z^{n-k}] \Phi^{n}(z),
\end{equation}
which together with~\eqref{eq:LIFB:step1} and~\eqref{eq:LIFB:step2} establishes the desired identity~\eqref{eq:LIF:goal}, completing the proof of the induction step (and thus Theorem~\ref{thm:LIF:N}). 
\end{proof}

\section{Discussion.} 
Let us now take a step back, and discuss the structure of our \mbox{inductive} proof of the Lagrange inversion formula. 
The first reduction step is standard (and intuitive): 
exploiting the linearity of the derivative, it suffices to prove the desired identity~\eqref{eq:LIF} for monomials~$H(z)=z^k$, i.e., it suffices to prove~\eqref{eq:LIF:goal:0}, which directly reduces to~\eqref{eq:LIF:goal}. 
In the induction step, it is natural to insert assumption~\eqref{as:LIF} to arrive at~\eqref{eq:LIFB:step1}, 
which is directly amenable to the induction hypothesis for a suitable formal power series~$H$ (exploiting that the induction hypothesis applies to arbitrary~$H$ instead of just monomials). 
The remaining steps from~\eqref{eq:LIFB:step2} onwards are again natural, 
and simply use the well-known derivative identities~\eqref{eq:der:identity} and~\eqref{eq:LIFB:step3}.
To sum up: all steps of the proof emerged naturally (since  none of them required any non-trivial ideas or insights), 
so we arguably presented a \mbox{`just-do-it'~proof} of the Lagrange~Inversion~Formula.

We remark that \cite[Section~4.2]{GesselSurvey} also contains an inductive proof, which has some similarities to the one given above.
However, that inductive proof is used to prove a somewhat indirect and less natural variant of the Lagrange inversion formula, so an additional argument is needed to deduce~\eqref{thm:LIF:N}. 
For combinatorial applications~\eqref{thm:LIF:N} is perhaps the most useful form of the Lagrange inversion formula, 
so it seems adequate to record our more direct (and simpler) inductive proof for expository reasons.

\section{Appendix.}\label{sec:apx}
We close by outlining, for completeness, a short proof of the well-known derivative identity~\eqref{eq:der:identity} for formal power series. 
First note~that~\eqref{def:der} gives 
\begin{equation}\label{eq:der:mon}
(z^{r}z^{s})' \stackrel{\eqref{def:der}}{=} (r+s)z^{r+s-1} \stackrel{\eqref{def:der}}{=} (z^{r})' z^{s} + z^{r} (z^{s})', 
\end{equation}
which by linearity implies (similar to the induction step in Section~\ref{sec:proof}) that
\begin{equation}\label{eq:der:pr}
\bigpar{F(z)G(z)}' = F'(z) G(z) + F(z) G'(z)
\end{equation}
for any two formal power series~$F(z)=\sum_{r \ge 0}f_rz^r$ and~$G(z)=\sum_{s \ge 0}g_sz^s$. 
Using the product rule~\eqref{eq:der:pr}, it then is easy to prove the desired derivative identity~\eqref{eq:der:identity} by induction on~$m \ge 1$ (the base case~$m=1$ being trivial).

\begin{acknowledgment}{Acknowledgment.}
We are very grateful to \mbox{Ira~Gessel} for several helpful comments and simplifications.
We thank \mbox{Juanjo~Ru\'e} for suggesting the \mbox{$t$-ary} tree example~\eqref{eq:example:0}, and  the referees for pointing out additional references. 
This work was supported by NSF~CAREER grant~DMS-2225631 and a Sloan Research Fellowship. 
\end{acknowledgment}


\begin{biog}

\item[Erlang Surya] 
\begin{affil}
Department of Mathematics, University of California San Diego, La Jolla CA~92093, USA\\
esurya@ucsd.edu
\end{affil}

\item[Lutz Warnke] 
\begin{affil}
Department of Mathematics, University of California San Diego, La Jolla CA~92093, USA\\
lwarnke@ucsd.edu
\end{affil}
\end{biog}
\vfill\eject

\end{document}